\theoremstyle{definition}
\newtheorem{ntn}{Notation}[section]
\newtheorem{dfn}[ntn]{Definition} 
\theoremstyle{plain}
\newtheorem{lem}[ntn]{Lemma}
\newtheorem{prp}[ntn]{Proposition}
\newtheorem{thm}[ntn]{Theorem}
\newtheorem{cor}[ntn]{Corollary}
\theoremstyle{remark}
\newtheorem{rmk}[ntn]{Remark}
\newtheorem{exa}[ntn]{Example}
\numberwithin{equation}{section}
\newcommand{\ideal}[1]{{\left\langle#1\right\rangle}}
\newcommand{\into}{\hookrightarrow}
\newcommand{\p}{\partial}
\newcommand{\xymat}{\SelectTips{cm}{}\xymatrix}
\newcommand{\wt}{\widetilde}
\newcommand{\dd}{\mathbf{d}}
\newcommand{\A}{\mathcal A}
\newcommand{\C}{\mathcal C}
\newcommand{\F}{\mathcal F}
\renewcommand{\O}{\mathcal O}
\newcommand{\PP}{\mathbb P}
\newcommand{\CC}{\mathbb C}
\newcommand{\QQ}{\mathbb Q}
\newcommand{\ZZ}{\mathbb Z}
\newcommand{\sExt}{\mathscr Ext}
\newcommand{\Macaulay}{\texttt{Macaulay2}}
\DeclareMathOperator{\length}{length}
\DeclareMathOperator{\Ext}{Ext}
\DeclareMathOperator{\rank}{rank}
\DeclareMathOperator{\pdim}{pdim}
\begin{document}

\title[Chern classes for non-free arrangements]{Chern classes of logarithmic derivations for some non-free arrangements}

\author{Ngoc Anh Pham}
\address{
N. A. Pham\\
University of Kaiserslautern\\
Department of Mathematics\\
P.O.~Box 3049\\
67653 Kaiserslautern\\
Germany}
\email{pham@mathematik.uni-kl.de}
\thanks{The financial support by DAAD through the Ph.D.~program 
``Mathematics in Industry and Commerce'' at TU Kaiserslautern is gratefully acknowledged.}

\thanks{The research leading to these results has received funding from the People Programme (Marie Curie Actions) of the European Union's Seventh Framework Programme (FP7/2007-2013) under REA grant agreement n\textsuperscript{o} PCIG12-GA-2012-334355.}

\begin{abstract} 
Paolo Aluffi showed that the Chern--Schwartz--MacPherson class of the complement of a free arrangement agrees with the total Chern class of the sheaf of logarithmic derivations along the arrangement.
We describe the defect of equality of the two classes for locally tame arrangements with isolated non-free singular loci.
\end{abstract}

\subjclass[2010]{14C17; 14L30; 14N15; 14Q20; 55N91; 68W30}

\keywords{hyperplane arrangement, Chern class, Chern--Schwartz--MacPherson class, Poincar\'e polynomial}

\date{\today}

\maketitle

\section{Introduction} 

Let $X$ be a non-singular variety over $\CC$ and $D$ be a hypersurface in $X$. 
In case $D$ is simple normal crossing, Paolo Aluffi first observed a coincidence of the Chern--Schwartz--MacPherson class $c_{SM}(X\setminus D)$ of the hypersurface complement $X\setminus D$ and the total Chern class $c(\Omega^1(\log D)^\vee)\cap [X]$ of the sheaf of differential $1$-forms with logarithmic poles along $D$ (see \cite[\S2]{Alu99}).
Based on this evidence, he raised the question whether the coincidence could persist for any free divisor $D$.
For projective arrangements whose affine cone is a free divisor, he gave a positive answer (see \cite[Thm.~4.1]{Alu13}).
In a sequence of articles, Aluffi's student Xia Liao first showed that the equality does not hold without further hypotheses (see \cite[Cor.~3.2]{Lia12}); then he proved the 
equality in question for all free divisors $D$ whose Jacobian ideal of linear type (see \cite{Lia12b}).
This latter algebraic condition holds true for locally quasihomogeneous hypersurfaces (see \cite[Thm.~5.6]{CN02}), so in particular for free arrangements.

In this note, we specialize to the class of non-free projective arrangements considered in \cite{DS12}.
Fix an $\ell$-dimensional vector space $V=\CC^\ell$. 
Let $\PP V=\PP^{\ell-1}$ be the corresponding projective space.
Consider a central arrangement of hyperplanes $\A$ in $V$ and let $\PP\A$ be the corresponding projective arrangement in $\PP V$.
By abuse of notation, we shall use $\A$ and $\PP\A$ as shorthands for $\bigcup_{H\in \A}H$ and $\bigcup_{H\in \A}\PP H$ respectively.

We denote by $L(\A)$ the intersection lattice of $\A,$ and by $L_c(\A)\subseteq L(\A)$ the sublattice of codimension-$c$ flats. For $X\in L(\A)$, let $\A_X=\{H\in\A\mid X\subseteq H\}$ be the localization of $\A$ at $X.$

By $M(\A)$ and $M(\PP\A)$ we denote the complements of $\A$ in $V$ and of $\PP\A$ in $\PP V$ respectively. 
Now the equality in question reads
\begin{equation}\label{for:1}
c(\Omega^1(\PP\A)^\vee)\cap [\PP V] = c_{SM}(M(\PP\A)).
\end{equation}
By Liao's result mentioned above, \eqref{for:1} holds true for locally free arrangements. 
In turn, our main result shows that \eqref{for:1} can fail in the non-free case, even if local quasihomogenity is imposed. 

\begin{thm}\label{thm:main}
Let $\PP\A$ be a locally tame arrangement in $\PP V$ with zero-dimensional non-free locus. Then 
\[
c(\Omega^1(\PP\A)^\vee)\cap [\PP V] = c_{SM}(M(\PP\A)) + ((-1)^{\ell-1}+(-1)^{\ell-2}(\ell-2)!)N(\PP\A)h^{\ell-1}
\]
where  $N(\PP\A)$ is the correction term from \cite[Def.~5.10]{DS12}, and $h$ denotes the class of a hyperplane.
\end{thm}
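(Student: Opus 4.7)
The strategy is to reduce the computation to a local analysis at the isolated non-free points, using Liao's theorem for the locally free case as a baseline. First, I would compute $c_{SM}(M(\PP\A))$ via an Aluffi--type combinatorial formula expressing it through the intersection lattice $L(\A)$; this side depends only on the combinatorics of $L(\A)$, and the comparison with the logarithmic side reduces to understanding what happens at the non-free flats. Second, I would analyze the logarithmic side sheaf-theoretically: $\Omega^1(\PP\A)^\vee$ fails to be locally free only on a finite set $\Sigma\subset\PP V$ under the given hypotheses.

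By the locally tame hypothesis, $\Omega^1(\PP\A)^\vee$ admits a short locally free resolution, and its failure of local freeness at each $p\in\Sigma$ is captured by the $\sExt$ sheaves supported at $p$, which are of finite length. Concretely, I would fit $\Omega^1(\PP\A)^\vee$ into a short exact sequence relating it to a locally free reference sheaf $\F$ that agrees with $\Omega^1(\PP\A)^\vee$ away from $\Sigma$; the cokernel (or kernel) is a zero-dimensional torsion sheaf $T$ supported on $\Sigma$ whose total length should match $N(\PP\A)$ from \cite[Def.~5.10]{DS12}. Additivity of Chern classes on the short exact sequence then concentrates the defect entirely in top degree $h^{\ell-1}$, where only the length of $T$ survives.

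To pin down the prefactor $(-1)^{\ell-1}+(-1)^{\ell-2}(\ell-2)!$, I would split the computation into two contributions at each non-free point: a sign $(-1)^{\ell-1}$ coming from the homological position of $T$ in the short exact sequence, and a term $(-1)^{\ell-2}(\ell-2)!$ measuring the discrepancy between the local Chern contribution of $\Omega^1(\PP\A)^\vee$ at a non-free point and the combinatorial CSM contribution of the flat above that point. The factor $(\ell-2)!$ is expected to emerge from the Poincar\'e-polynomial weight (equivalently, the Möbius value) attached to a codimension-$(\ell-1)$ non-free flat via Aluffi's formula.

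The main technical obstacle is matching the length of $T$ with $N(\PP\A)$ and accounting precisely for the local-versus-combinatorial discrepancy at each non-free flat. This requires either an explicit global resolution of $\Omega^1(\PP\A)^\vee$ adapted to the locally tame setting, or a careful application of local-to-global $\sExt$ spectral sequences to reduce global data to the local structure of $\A_X$ at each non-free $X\in L(\A)$. The delicate book-keeping that produces the closed-form coefficient $(-1)^{\ell-1}+(-1)^{\ell-2}(\ell-2)!$ ultimately rests on the precise definition of $N(\PP\A)$ in \cite{DS12} and on the tame hypothesis, which guarantees that the relevant local Ext modules are finite-dimensional and well-behaved.
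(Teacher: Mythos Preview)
Your outline has the right architecture---a short locally free resolution, zero-dimensional torsion at the non-free points, defect concentrated in top degree---but your attribution of the two pieces of the coefficient is wrong, and you omit the key input. In the paper one takes a length-one resolution $0\to\F_1\to\F_0\to\Omega^1(\PP\A)(1)\to0$ (from local tameness), dualizes to a four-term sequence with cokernel $\sExt^1_{\O_{\PP V}}(\Omega^1(\PP\A),\O_{\PP V})$, and applies Whitney to obtain
\[
c_t(\Omega^1(\PP\A)^\vee(-1))=c_{-t}(\Omega^1(\PP\A)(1))\cdot c_t(\sExt^1).
\]
The factor $(-1)^{\ell-2}(\ell-2)!$ is then the top Chern coefficient of a reduced point in $\PP^{\ell-1}$ (Fulton, Ex.~15.3.1), applied to the length-$N(\PP\A)$ sheaf $\sExt^1$; it is a pure intersection-theory fact and has nothing to do with M\"obius values or Poincar\'e weights at non-free flats. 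The other summand $(-1)^{\ell-1}$ arises when one substitutes $t\mapsto -t$ into the Denham--Schulze formula
\[
c_t(\Omega^1(\PP\A)(1))=\pi(\PP\A,t)+N(\PP\A)\,t^{\ell-1},
\]
which you never invoke; this is the essential bridge, since it already records $N(\PP\A)$ as the exact deviation of the Chern polynomial from the Poincar\'e polynomial.

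Once Denham--Schulze is in hand, Aluffi's identity $c_{SM}(M(\PP\A))=(1+h)^{\ell-1}\pi(\PP\A,-h/(1+h))$ finishes the proof by a binomial expansion. Your proposed route---comparing to an unspecified locally free ``reference sheaf'' and then separately accounting for a combinatorial CSM discrepancy at each non-free flat---does not produce the stated coefficient: Aluffi's formula for $c_{SM}$ is exact for \emph{every} arrangement, free or not, so there is no combinatorial discrepancy to account for. Both correction terms live entirely on the logarithmic side.
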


Besides recalling the relevant terminology for projective arrangements and Chern classes, we prove Theorem~\ref{thm:main} in \S\ref{section3}.

\subsection*{Acknowledgements}
I would like to thank Mathias Schulze for many invaluable suggestions while working on this paper.

\section{Preliminaries}\label{section1}

\subsection{Poincar\'e polynomials}

We use the notions in the introduction.
The \emph{Poincar\'e polynomial} of any (not necessarily central) arrangement $\A$ is defined as
\[
\pi(\A,t) = \sum_{X\in L(\A)}\mu(X)(-t)^{\rank(X)},
\]
where $\rank(X)$ is the codimension of $X$ in $V,$ and $\mu$ is the M\"obius function on the intersection poset $L(\A)$.
This terminology is justified by a classical topological interpretation due to Orlik and Solomon (see \cite[Thm.~5.93]{OT92}) in case of complex arrangements. 

\begin{thm}[Orlik--Solomon]\pushQED{\qed}
For a complex arrangement $\A$,
\[
\pi(\A,t) = \sum_{i = 0}^\ell\rank H^i(M(\A),\ZZ)t^i.\qedhere
\]
\end{thm}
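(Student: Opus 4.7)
The approach is local-to-global. By Liao's theorem \cite{Lia12b}, combined with the local quasihomogeneity statement in \cite[Thm.~5.6]{CN02}, the identity \eqref{for:1} holds over the open locus $U\subseteq\PP V$ on which $\PP\A$ is locally free (local tameness at a free point gives the Jacobian-of-linear-type hypothesis Liao needs). Consequently the defect class
\[
c(\Omega^1(\PP\A)^\vee)\cap[\PP V]-c_{SM}(M(\PP\A))
\]
is supported on the closed complement of $U$. By hypothesis that support is zero-dimensional, so the defect lies in $H^{2(\ell-1)}(\PP V)=\ZZ\cdot h^{\ell-1}$, and the theorem is reduced to the computation of a single integer coefficient.

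To pin down that coefficient I would take degrees. On the one hand the degree of $c_{SM}(M(\PP\A))$ equals $\chi(M(\PP\A))=\pi(\A,-1)$ by Orlik--Solomon. On the other hand the degree of $c(\Omega^1(\PP\A)^\vee)\cap[\PP V]$ is the integral of the top Chern class of the logarithmic derivation sheaf, which for locally tame arrangements can be rewritten via Hirzebruch--Riemann--Roch as the Euler characteristic of the logarithmic de Rham complex. Exactly this Euler characteristic is analysed in \cite{DS12}: their correction term $N(\PP\A)$ measures its deviation from $\pi(\A,-1)$. Combining these two computations yields a scalar multiple of $N(\PP\A)$, and the universal prefactor $(-1)^{\ell-1}+(-1)^{\ell-2}(\ell-2)!$ is produced by the translation between an alternating sum of Chern characters of $\Omega^p(\log\PP\A)$ and a single top Chern class on the $(\ell-1)$-dimensional variety $\PP V$, where the sign $(-1)^{\ell-1}$ comes from the parity of the ambient dimension and the factor $(\ell-2)!$ from the Todd/Chern expansion on a zero-dimensional residual subscheme.

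To promote this degree identity to an equality of classes it suffices to observe that the defect is a priori a multiple of $h^{\ell-1}$, which is determined by its degree; alternatively, additivity of $N$ over the non-free locus together with the locality of $c_{SM}$ and of $c(\Omega^1(\PP\A)^\vee)$ localises the problem to each non-free point and reduces it to the corresponding statement for $\A_p$. The main obstacle I anticipate is precisely this local computation. At a non-free point $p$ the sheaf $\Omega^1(\PP\A)^\vee$ is not locally free, so its Chern class must be accessed through a finite locally free resolution whose length is controlled by local tameness and whose Chern-character contributions involve the lengths of the higher $\sExt$ sheaves. Matching the alternating signs and factorials coming out of this resolution with the combinatorial definition of $N(\PP\A)$ in \cite[Def.~5.10]{DS12}, and confirming that exactly the coefficient $(-1)^{\ell-1}+(-1)^{\ell-2}(\ell-2)!$ survives, is the delicate bookkeeping step on which the argument hinges.
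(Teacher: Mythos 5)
Your proposal does not address the statement at all. The statement to be proved is the classical Orlik--Solomon theorem: the combinatorial Poincar\'e polynomial $\pi(\A,t)=\sum_{X\in L(\A)}\mu(X)(-t)^{\rank(X)}$, defined via the M\"obius function on the intersection lattice, coincides with the generating function of the Betti numbers of the complement $M(\A)$. What you have written is instead a proof sketch of the paper's main result (Theorem~\ref{thm:main}), the defect formula comparing $c(\Omega^1(\PP\A)^\vee)\cap[\PP V]$ with $c_{SM}(M(\PP\A))$. Nowhere do you engage with the M\"obius function, the lattice $L(\A)$, or the cohomology of $M(\A)$; indeed you explicitly \emph{invoke} Orlik--Solomon as a known input (``the degree of $c_{SM}(M(\PP\A))$ equals $\chi(M(\PP\A))=\pi(\A,-1)$ by Orlik--Solomon''), which would make the argument circular even if it were aimed at the right target.

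For the record: the paper itself gives no proof of this theorem --- it is quoted from Orlik--Terao \cite[Thm.~5.93]{OT92} and stated with a closing \verb|\qedhere|. The standard proofs go through the Brieskorn decomposition and the Orlik--Solomon algebra, or by deletion--restriction induction on the number of hyperplanes; the paper also points to Aluffi's alternative argument computing the Grothendieck class of $M(\PP\A)$ and using purity of its mixed Hodge structure. Any acceptable proof must produce one of these (or an equivalent) lines of reasoning; your text contains none of them, so the gap here is not a missing step but a wholesale mismatch between the claim and the argument.
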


Aluffi \cite[\S2.2]{Alu13} gave a direct proof of this theorem by computing the Grothendieck class of $M(\PP\A)$ and using the purity of its mixed Hodge structure. 

For $H=\{\alpha_H=0\}\in\A$, the deconing of a central arrangement $\A$ with respect to $H$ is the (possibly non-central) arrangement
\[
\dd\A=\bigcup_{H\ne H'\in\A}{H'\cap\{\alpha_H=1\}}
\]
in $\{\alpha_H=1\}\cong\CC^{\ell-1}$.
By \cite[Prop.~2.51]{OT92}, 
\[
\pi(\A,t) = (1+t)\pi(\dd\A,t).
\]
On the other hand, $M(\A)\cong M(\PP\A)\times\CC^*$ and $M(\PP\A)\cong M(\dd\A)$ as topological spaces.
We therefore call
\begin{equation}\label{eq:redPi}
\pi(\PP\A,t)= \sum_{i = 0}^{\ell-1}\rank H^i(M(\PP\A),\QQ)t^i = \pi(\dd\A,t) = \frac{\pi(\A,t)}{1+t} 
\end{equation}
the Poincar\'e polynomial of $\PP\A$.

\subsection{Logarithmic forms}

For an effective divisor $D$ in a smooth complex variety $X$, the sheaf $\Omega^p(\log D)$ of \emph{logarithmic differential $p$-forms along $D$} consists of rational differential $p$-forms $\omega$ on $X$ such that both $\omega$ and $d\omega$ have at most a simple pole along $D$ (see \cite{Saito80}). 
We consider these sheaves in case $D=\PP\A$ and $D=\A$ and denote them by $\Omega^p(\PP\A)$ and $\Omega^p(\A)$ respectively.
The sheaf $\Omega^p(\A)$ on the affine space $V$ can be identified with its module of global sections
\begin{equation}\label{eq:defOmegalog}
\Omega^p(\A) = \left\{\omega\in \frac{1}{f}\Omega_V^p \mid fd\omega\in \Omega_V^{p+1}\right\} = \left\{\omega\in \frac{1}{f}\Omega_V^p \mid df\wedge\omega\in \Omega_V^{p+1}\right\},
\end{equation}
where $\Omega_V^p=\bigwedge^p(\bigoplus_{i=1}^\ell Sdz_i)$ is the module of differential $p$-forms, and $f\in S:=\CC[z_1,\dots,z_\ell]$ is a homogeneous defining polynomial of $\A$  of degree $d:=\deg(f)$. 

The ring $S$ and hence the $S$-module $\Omega_V^p$ are naturally graded by setting $\deg z_i=\deg dz_i=1$.
Then also $\Omega^p(\A)$ is a graded $S$-module: An element $\omega\in\Omega^p(\A)$ is homogeneous of degree $i$ if $f\omega$ is homogeneous of degree $d+i$.

Denote by $\chi:=\sum_{i=1}^\ell z_i\p_{z_i}$ the Euler derivation.
From the proof of \cite[Thm.~8.13]{Har77} one deduces that the Euler sequence fits into a commutative diagram (see \eqref{eq:Euler} below)
\begin{equation}\label{eq:Euler}
\xymat{
0\ar[r] & \Omega^2_{\PP V}\ar[r]^{\varphi^2} & \wt{\Omega^2_V}\\
0\ar[r] & \Omega^1_{\PP V}\ar[u]^d\ar[r]^{\varphi^1} & \wt{\Omega^1_V}\ar[u]^{\wt d}\ar[r]^-{\ideal{\chi,-}} & \O_{\PP V}\ar[r] & 0\\
&\O_{\PP V}\ar[u]^-d\ar@{=}[r] & \wt S\ar[u]^-{\wt d}
}
\end{equation}
of locally free $\O_{\PP V}$-modules with exact rows.
Note that the map $\ideal{\chi,-}$ is obtained by applying $\wt-$ to the corresponding map $\Omega^1_V\to S$.
Since $\wt{\Omega^1_V}\cong\O_{\PP V}(-1)^{\ell}$ it does not split. 
However such a splitting holds true after passing to the logarithmic analogue.

\begin{prp}\label{prp:logEuler}
There is a split exact logarithmic Euler sequence  
\begin{equation}\label{eq:logEuler}
\xymat{
0\ar[r] & \Omega^1(\PP\A)\ar[r] & \wt{\Omega^1(\A)}\ar[r]^-{\ideal{\chi,-}}\ar[r] & \O_{\PP V}\ar[r] & 0.
}
\end{equation}
\end{prp}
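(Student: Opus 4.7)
The plan is to promote the top row of \eqref{eq:Euler} to its logarithmic analogue by extending the contraction $\iota_\chi$ along the inclusions $\Omega^1_{\PP V}\hookrightarrow\Omega^1(\PP\A)$ and $\wt{\Omega^1_V}\hookrightarrow\wt{\Omega^1(\A)}$, and then exhibiting an explicit splitting via the logarithmic form $df/f$.

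First, I would upgrade the contraction to a graded $S$-linear map $\iota_\chi\colon\Omega^1(\A)\to S$. A priori this map only takes values in the localization $S_f$, so the key point is to show that the result is polynomial. For $\omega=\eta/f\in\Omega^1(\A)$ with $\eta\in\Omega^1_V$ satisfying $df\wedge\eta\in f\Omega^2_V$, I would apply the antiderivation $\iota_\chi$ to this relation; using the Euler identity $\chi(f)=kf$ with $k:=\deg f$, this produces $kf\eta-\iota_\chi(\eta)\,df\in f\Omega^1_V$, and since $f$ is reduced it forces $f\mid\iota_\chi(\eta)$. Equivalently, one invokes Saito's logarithmic pairing and the fact that $\chi\in\operatorname{Der}(-\log\A)$. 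Sheafifying then produces the desired map $\wt{\Omega^1(\A)}\to\O_{\PP V}$.

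Next, I would verify exactness chart-locally. On each standard chart $D_+(z_i)\subset\PP V$ with affine coordinates $y_j=z_j/z_i$, a short change-of-coordinate computation gives $\chi=z_i\partial_{z_i}$ in the ambient coordinates on $V$. Writing $f=z_i^d\bar f(y)$, this yields a local decomposition $\wt{\Omega^1(\A)}|_{D_+(z_i)}=\Omega^1(\PP\A)|_{D_+(z_i)}\oplus\O_{D_+(z_i)}\cdot(dz_i/z_i)$ with respect to which $\iota_\chi$ is projection onto the second summand. This simultaneously identifies the kernel with $\Omega^1(\PP\A)$ and proves local surjectivity.

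Finally, the splitting is explicit. Since $df/f\in\Omega^1(\A)_0$ defines a global section of $\wt{\Omega^1(\A)}$ and $\iota_\chi(df/f)=k$ by Euler's identity, the assignment $1\mapsto\tfrac{1}{k}\,df/f$ gives the desired $\O_{\PP V}$-linear splitting $\O_{\PP V}\to\wt{\Omega^1(\A)}$. The main obstacle I anticipate is the local identification of the kernel with $\Omega^1(\PP\A)$: careful bookkeeping of the grading and of the decomposition of $\A$ in each chart is needed to see that logarithmic poles along $\A\subset V$ split into a $dz_i/z_i$-summand plus logarithmic poles along $\PP\A\subset\PP V$.
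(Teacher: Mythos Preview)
Your proposal is correct and follows essentially the same route as the paper: extend the contraction $\iota_\chi$ to $\Omega^1(\A)\to S$, verify exactness of the resulting sequence chart-locally on $D_+(z_i)$, and split via $\tfrac{1}{d}\tfrac{df}{f}$. The only presentational difference is that the paper packages the local computation as a diagram chase through the meromorphic Euler sequence $0\to\Omega^1_{\PP V}(*\PP\A)\to\wt{\Omega^1_V(*\A)}\to\O_{\PP V}(*\PP\A)\to 0$ and then checks that the resulting preimage is logarithmic, whereas you argue directly via the coordinate identity $\chi=z_i\partial_{z_i}$ in the variables $(z_i,y_j)$ to obtain the splitting $\wt{\Omega^1(\A)}|_{D_+(z_i)}\cong\Omega^1(\PP\A)|_{D_+(z_i)}\oplus\O\cdot(dz_i/z_i)$; your antiderivation argument for $f\mid\iota_\chi(\eta)$ is also more explicit than the paper, which simply asserts that \eqref{eq:Eulercontract} exists.
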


\begin{proof}
Tensoring the middle row of \eqref{eq:Euler} by $\O_{\PP V}(*\PP\A)$, yields a diagram
\begin{equation}
\xymat{
0\ar[r] & \Omega_{\PP V}^1\ar[r]^{\varphi^1}\ar@{^(->}[d] & \wt{\Omega_V^1}\ar[r]^{\ideal{\chi,-}}\ar[r]\ar@{^(->}[d] & \O_{\PP V}\ar[r]\ar@{=}[d] & 0\\
0\ar[r] & \Omega^1(\PP\A)\ar@{^(->}[d]\ar@{-->}[r]^{\varphi^1}\ar[dr] & \wt{\Omega^1(\A)}\ar@{^(->}[d]\ar[r]^{\ideal{\chi,-}} & \O_{\PP V}\ar@{^(->}[d]\ar[r] & 0\\
0\ar[r] & \Omega^1_{\PP V}(*\PP\A)\ar[r] & \wt{\Omega^1_V(*\A)}\ar[r]^-{\ideal{\chi,-}} & \O_{\PP V}(*\PP\A)\ar[r] & 0
}
\end{equation}
with exact upper and lower row.
The extension of $\ideal{\chi,-}$ obtained by applying $\wt-$ to the corresponding map
\begin{equation}\label{eq:Eulercontract}
\ideal{\chi,-}:\Omega^1_V(\A)\to S.
\end{equation}
The dashed map is injective once it exists and existence can be checked locally.
Leaving this part to the reader, we prove exactness of \eqref{eq:logEuler} in the middle.
It suffices to look at the chart $U_\ell=D(z_\ell)$ of $\PP V$ with coordinate ring $R:=\CC[x_1,\dots,x_{\ell-1}]$ where $x_i=\frac{z_i}{z_\ell}$, $i=1,\dots,\ell-1$.
By \cite[Thm.~8.13]{Har77}, the map 
\[
\xymat{
\bigoplus_{i=1}^{\ell-1}Rdx_i=\Omega^1_{\PP V}(U_\ell)\ar[r]^-{\varphi^1_{U_\ell}} & \wt{\Omega^1_V}(U_\ell)=\Omega^1_{V,(z_\ell)}=\bigoplus_{i=1}^{\ell}R\frac{dz_i}{z_\ell}
}
\]
is given explicitly by
\begin{equation}\label{eq:Eulerd}
\varphi^1_{U_\ell}(dx_i)=d\left(\frac{z_i}{z_\ell}\right)=\frac{dz_i}{z_\ell}-x_i\frac{dz_\ell}{z_\ell}.
\end{equation}
Let $\omega\in\wt{\Omega^1(\A)}(U_\ell)$ such that $\ideal{\chi,\omega}=0$.
By \eqref{eq:defOmegalog} and \eqref{eq:Euler}, we can write
\begin{equation}\label{eq:omega}
\omega=\frac{\eta}{f(z)}=\frac{\eta}{z_\ell^d}\frac{z_\ell^d}{f(z)}=\frac{\varphi^1_{U_\ell}(\eta')}{f(x)}=\varphi^1_{U_\ell}\left(\frac{\eta'}{f(x)}\right)
\end{equation}
where $\eta\in\Omega_{V,z_\ell}^1$, $\deg(\eta)=\deg(f)=d$, and $\eta'\in\Omega^1_{\PP V}(U_\ell)$.
Note that \eqref{eq:defOmegalog} is compatible with localization and multiplying $f$ by a unit.
In particular, the definition of $\Omega^1(\A)_{z_\ell}$ is invariant under replacing $f(z)$ by $f(x)=\frac{f(z)}{z_\ell^d}$. 
Thus, by \eqref{eq:defOmegalog} and \eqref{eq:Euler}, we have
\[
\varphi^2_{U_\ell}(df(x)\wedge\eta')=\wt df(x)\wedge\varphi^1_{U_\ell}(\eta')\in f(x)\wt{\Omega^2_V}(U_\ell).
\]
Due to \eqref{eq:Eulerd} $\varphi^2_{U_\ell}$ is an inclusion of free $\O_{\PP V}(U_\ell)$-modules, and hence
\[
df(x)\wedge\eta'\in f(x)\Omega^2_{\PP V}(U_\ell).
\]
This implies $\frac{\eta'}{f(x)}\in\Omega^1(\PP\A)(U_\ell)$ by definition. 
The exactness of \eqref{eq:logEuler} in the middle follows from \eqref{eq:omega}.
Finally the splitting of \eqref{eq:logEuler} is induced by the the section $\frac1d\frac{df}f$ of \eqref{eq:Eulercontract}.
\end{proof}

\begin{rmk}
The kernel $\Omega^1_0(\A)$ of \eqref{eq:Eulercontract} is the module of \emph{relative logarithmic differential $1$-forms along $\A$}.
From Proposition~\ref{prp:logEuler}, we obtain (see also \cite[Prop.~2.12]{DS12})
\begin{equation}\label{eq: Omega zero}
\Omega^1(\PP\A)\cong\wt{\Omega_0^1(\A)}.
\end{equation}
With $\Omega^1(\A)$ also its direct summand $\Omega^1_0(\A)$ is reflexive module.
In particular, both modules have no $\ideal{z}$-torsion and hence $\Omega^1(\A)=\Gamma^*\wt{\Omega^1(\A)}$ as well as $\Omega^1_0(\A)=\Gamma^*\wt{\Omega^1_0(\A)}$.
\end{rmk}

By the splitting of \eqref{eq:logEuler},
\begin{equation}\label{eq: Omega split}
\wt{\Omega^1(\A)}(1)\cong\Omega^1(\PP\A)(1)\oplus\O_{\PP V}(1),
\end{equation}
which yields the following relation of Chern polynomials:
\begin{equation}\label{eq:redCh}
c_t(\wt{\Omega^1(\A)}(1)) = (1+t) c_t(\Omega^1(\PP\A)(1)).
\end{equation}

We call the arrangement $\PP\A$ \emph{locally free} if the sheaf $\Omega^1(\PP\A)$ is a vector bundle. 
By \eqref{eq:logEuler}, this is equivalent to the local freeness of the central arrangement $\A$ in the
sense of \cite{Ser93} (see \cite[Lem.~2.16]{DS12}). 
For locally free arrangements, Musta{\c{t}}{\v{a}} and Schenck~\cite[Thm.~4.1]{MS01} established a relation between the Poincar\'e polynomial and the Chern polynomial of logarithmic forms.
Using \eqref{eq:redPi} and \eqref{eq:redCh}, it can be formulated as follows (see \cite[Thm.~5.9]{DS12}):

\begin{thm}[Musta{\c{t}}{\v{a}}--Schenck]
If $\PP\A$ is a locally free arrangement in $\PP V,$ then 
\begin{equation}\label{proj-Mustata}
c_t(\Omega^1(\PP\A)(1)) = \pi(\PP\A,t).
\end{equation}
\end{thm}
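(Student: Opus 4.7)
The plan is to reduce the projective statement to its central analogue and then prove the central version by induction on $|\A|$ via deletion/restriction.

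First, I would combine the splitting relation \eqref{eq:redCh}, $c_t(\wt{\Omega^1(\A)}(1)) = (1+t)\, c_t(\Omega^1(\PP\A)(1))$, with the deconing identity \eqref{eq:redPi}, $\pi(\A,t) = (1+t)\, \pi(\PP\A,t)$. Cancelling the common factor $1+t$, the projective claim \eqref{proj-Mustata} is equivalent to the central form
\[
c_t(\wt{\Omega^1(\A)}(1)) = \pi(\A,t),
\]
which is the original Musta\c{t}\v{a}--Schenck statement in \cite[Thm.~4.1]{MS01}.

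For the central version I would argue by induction on $|\A|$. The base case $\A=\emptyset$ is immediate, since $\wt{\Omega^1(\A)}(1) \cong \O_{\PP V}^{\ell}$ (so $c_t = 1$) and $\pi(\emptyset,t) = 1$. For the induction step, I would pick $H\in \A$ and form the triple $(\A, \A':=\A\setminus\{H\}, \A'':=\A^H)$. The Poincar\'e polynomials satisfy the triple recursion $\pi(\A,t) = \pi(\A',t) + t\,\pi(\A'',t)$, so it suffices to match this with a corresponding Chern-polynomial recursion. The latter should come from a short exact sequence of sheaves on $\PP V$ of the form
\[
0 \to \wt{\Omega^1(\A')} \to \wt{\Omega^1(\A)} \to i_*\mathcal G \to 0,
\]
with $i:\PP H\hookrightarrow \PP V$ and $\mathcal G$ an appropriate twist of $\wt{\Omega^1(\A'')}$. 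Whitney multiplicativity together with a Chern-polynomial computation for $i_*\mathcal G$ via the Koszul resolution $0\to \O_{\PP V}(-1)\to \O_{\PP V}\to i_*\O_{\PP H}\to 0$ should then close the recursion.

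The main obstacle is twofold. First, local freeness of $\A$ does not in general pass to $\A'$ or $\A''$, so the inductive hypothesis cannot be invoked blindly; this forces one to work within Terao's addition-deletion framework, or to set up an appropriate derived version of the recursion. Second, the bookkeeping of twists in the short exact sequence, together with the Chern-polynomial computation for the non-locally-free pushforward $i_*\mathcal G$, is delicate. A more conceptual alternative that avoids the inductive bookkeeping is to exploit that, for a locally free arrangement, the logarithmic de Rham complex $\Omega^\bullet(\PP\A)$ is quasi-isomorphic to $Rj_*\CC_{M(\PP\A)}$ with $j:M(\PP\A)\hookrightarrow \PP V$, and then apply Grothendieck--Riemann--Roch; however, extracting the full Chern polynomial identity (rather than only its Euler characteristic) this way still requires comparable homological analysis.
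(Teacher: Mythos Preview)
Your reduction of the projective identity to the central one via \eqref{eq:redCh} and \eqref{eq:redPi} is fine and is implicitly how the paper uses \cite[Thm.~4.1]{MS01}. But from that point on your plan diverges from the paper and, as you yourself flag, runs into a real obstruction. The paper does \emph{not} reprove the central Musta\c{t}\v{a}--Schenck formula; it takes \cite[Thm.~4.1]{MS01} as a black box for \emph{essential} $\A$ and only supplies the missing step, namely dropping the essentiality hypothesis. That step is an induction on the dimension of the center: one slices by a generic hyperplane $H$, uses the short exact sequence $0\to N^\vee\to i^*\Omega^1(\PP\A')\to\Omega^1(\PP\A'')\to 0$ with $N=\O_{\PP H}(1)$, and compares the two sides after pushing forward along $i\colon\PP H\hookrightarrow\PP V$ via the projection formula. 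Crucially, local freeness of $\PP\A'$ passes to the generic slice $\PP\A''$, so the induction is clean.

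Your proposed induction on $|\A|$ via the triple $(\A,\A',\A'')$ does not share this feature: local freeness of $\A$ need not descend to either $\A'$ or $\A''$, so the inductive hypothesis is unavailable exactly where you need it. You note this, but offer no workaround; invoking ``Terao's addition--deletion framework'' does not help, since that framework characterizes when freeness (not merely local freeness) is preserved, and in general none of the three arrangements in a triple need be free or locally free. The alternative you mention (log de Rham quasi-isomorphism plus GRR) would, as you say, only recover Euler-characteristic information without substantial additional work. So as written the proposal is a sketch with an acknowledged but unresolved gap, whereas the paper's argument sidesteps the issue entirely by inducting on the dimension of the center rather than on the number of hyperplanes.
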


\begin{proof}
We only need to show that the hypothesis of \cite{MS01} that $\A$ is an essential arrangement can be dropped.
To this end, let $\A'$ be a non-essential central arrangement. 
Proceeding by induction on the dimension of the center of $\A'$, we may assume that there is a hyperplane $H$ in $V$ such that (\ref{proj-Mustata}) holds true for $\A'':=\{H'\cap H\mid H'\in\A'\}$.
We then have 
\begin{equation}\label{piAess}
\pi(\PP\A'',t)=\pi(\PP\A',t)
\end{equation}
by \cite[Lem.~2.50]{OT92} and \eqref{eq:redPi}.

Let $i\colon\PP H\into\PP V$ denote the inclusion.
Then we have an exact sequence 
\[
\xymat{
0\ar[r] & N^\vee\ar[r] & i^*\Omega^1(\PP\A')\ar[r] & \Omega^1(\PP\A'')\ar[r] & 0,
}
\]
where $N=\O_{\PP H}(1)$ denotes the normal bundle to $\PP H$ in $\PP V$.
Using the projection formula (see \cite[Prop.~2.5.(c) and 2.6.(b)]{Ful98}), it follows that
\begin{align*}
i_*(c_t(\Omega^1(\PP\A'')(1))\cap[\PP H])
&=i_*(c_t(i^*\Omega^1(\PP\A')(1))\cap[\PP H])\\
&=c_t(\Omega^1(\PP\A')(1))\cap i_*i^*[\PP V]\\
&=c_t(\Omega^1(\PP\A')(1))\cap c_1(\O_{\PP V}(1))\cap [\PP V]\\
&=tc_t(\Omega^1(\PP\A')(1))\cap [\PP V].
\end{align*}
A similar argument can be found in \cite[\S4]{Lia12a}.
On the other hand, regarding $t = c_1(\O_{\PP H}(1))=c_1(i^*\O_{\PP V}(1))\in A^1(\PP H)$ and writing $\pi({\PP\A''},t) = \sum_kb_kt^k$ for some $b_k\in\ZZ$, we obtain
\begin{align*}
i_*( \pi({\PP\A''},t)\cap[\PP H])
&=i_*(\pi({\PP\A''},c_1(i^*\O_{\PP V}(1)))\cap [\PP H])\\
&=\sum_kb_k(i_*(c_1(i^*\O_{\PP V}(1)))^k\cap i_*i^*[\PP V])\\
&=\sum_kb_kc_1(\O_{\PP H}(1))^{k+1}\cap [\PP V]\\
&=t\pi({\PP\A''},t)=t\pi({\PP\A'},t),
\end{align*}
where the last equality is \eqref{piAess}.
Combining the above two equalities yields
\[
c_t(\Omega^1(\PP\A')(1))\cap [\PP V] =\pi({\PP\A'},t)
\]
as claimed.
\end{proof}

For the Chern--Schwartz--MacPherson class (see \S\ref{sec csm}), Aluffi~\cite[Proof of Thm.~4.1]{Alu13} proved a similar reduction to the essential case.

\subsection{Chern--Schwartz--MacPherson classes}\label{sec csm}

The Chern--Schwartz--MacPherson class of a possibly singular variety was constructed by MacPherson in \cite{MacPherson}. If the variety is non-singular,
this class is the total (homology) Chern class of the tangent bundle.

For the purpose of this paper, we consider the projective space $\PP V$ introduced in the previous section. 
A \emph{constructible function} on a projective scheme $X\subseteq \PP V$ is a formal $\sum_{W}n_W\cdot\mathbbm{1}_W,$
where the $W$ are closed subvarieties of $X$, almost all integers $n_W$ are zero, and the $\mathbbm{1}_W$ are characteristic functions. 
They form a group denoted by $\C(X)$. 
If $f\colon X\rightarrow Y$ is a proper morphism between projective schemes, the push-forward of constructible functions $\C(f): \C(X)\rightarrow \C(Y)$ is defined by
\[
f_*(\mathbbm{1}_W)(p) = \chi(f^{-1}(p)\cap W) , \text{ for } p\in Y.
\]
 Here, $\chi$ is the topological Euler characteristic of a subset of $X$ in the complex topology. This definition makes $\C$ into a covariant functor from the category of projective schemes to the category of abelian groups. 
The functor $\C$ of constructible functions above and  the \emph{Chow group} functor $A_*$ (see \cite[Ch.~1]{Ful98}) have a nice relation. 
Grothendieck -- Deligne conjectured and MacPherson proved that there is a natural transformation
$c_*\colon\C\rightarrow A_*$,
such that if $X$ is non-singular then 
\[
c_*(\mathbbm{1}_X) = c(T_X)\cap [X],
\]
where $c(T_X)$ is the total Chern class of the tangent bundle on $X$ (see \cite{MacPherson}).
The \emph{(total) Chern--Schwartz--MacPherson (CSM) class} of a (possibly singular) subvariety $X\subseteq \PP V$ is defined as follows:
\[
c_{SM}(X) := c_*(\mathbbm{1}_X)\in A_*(X),
\]
(see in \cite[\S1.3]{Alu05}).
We write the Chow group of $\PP V$ as $A_*(\PP V) \cong\ZZ[h]/\langle h^\ell\rangle$, where $h$ denotes the class of a hyperplane in $\PP V$.
Identifying $c_{SM}(X)$ with its image under the canonical map $A_*(X)\into A_*(\PP V)$, we have
\[
c_{SM}(X) = a_0 + a_1h + \cdots + a_{\ell-1}h^{\ell-1}.
\]

\subsection{Aluffi's formula}\label{sec Aluffi}

For a projective arrangement $\PP\A$, Aluffi~\cite[Cor.~3.2]{Alu13} proved the following relation of the CSM class $c_{SM}(M(\PP\A))$ of the complement $M(\PP\A)$ and the Poincar\'e polynomial $\pi(\PP\A,t)$.

\begin{lem}[Aluffi]
For any projective arrangement $\PP\A\subset \PP V,$ one has
\begin{equation}\label{c-p}
c_{SM}(M(\PP\A)) = (1+h)^{\ell-1}\pi\biggl(\PP\A, \frac{-h}{1+h}\biggl)\cap [\PP V].
\end{equation}
\end{lem}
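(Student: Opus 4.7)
The plan is to expand $\mathbbm{1}_{M(\PP\A)}$ by Möbius inversion on the intersection lattice, apply the natural transformation $c_*$ term by term, and then rewrite the resulting polynomial in $h$ in terms of $\pi(\PP\A,t)$.

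First I would prove the pointwise identity of constructible functions on $\PP V$
\[
\mathbbm{1}_{M(\PP\A)} = \sum_{X\in L(\A)}\mu(X)\,\mathbbm{1}_{\PP X},
\]
where $X=V$ contributes $\mathbbm{1}_{\PP V}$. This is the usual Möbius inversion on $L(\A)$ (with $\hat{0}=V$): evaluating at $p\in\PP V$, the sum reduces to $\sum_{Y\ge X_p}\mu(Y)$ where $X_p$ is the smallest flat containing (a lift of) $p$, and this equals $0$ unless $X_p=V$, i.e.\ $p\in M(\PP\A)$.

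Next I would apply $c_*$ to both sides. By MacPherson's theorem, $c_*(\mathbbm{1}_{\PP X})=c_{SM}(\PP X)$, and because $\PP X\subseteq\PP V$ is a linear subspace of projective dimension $d=\ell-1-\rank(X)$, the Euler sequence on $\PP X$ gives $c_{SM}(\PP X)=(1+h)^{d+1}\cap[\PP X]$ in $A_*(\PP X)$. Pushing forward along $\PP X\hookrightarrow\PP V$ turns $[\PP X]$ into $h^{\rank(X)}\cap[\PP V]$, so
\[
c_{SM}(\PP X)=(1+h)^{\ell-\rank(X)}h^{\rank(X)}\cap[\PP V]
\]
in $A_*(\PP V)$.

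Summing and factoring out $(1+h)^\ell$,
\[
c_{SM}(M(\PP\A))=(1+h)^\ell\sum_{X\in L(\A)}\mu(X)\left(\frac{h}{1+h}\right)^{\rank(X)}\cap[\PP V]
=(1+h)^\ell\,\pi\!\left(\A,\tfrac{-h}{1+h}\right)\cap[\PP V],
\]
using the definition $\pi(\A,t)=\sum_X\mu(X)(-t)^{\rank(X)}$. Finally, the relation $\pi(\A,t)=(1+t)\pi(\PP\A,t)$ from \eqref{eq:redPi}, evaluated at $t=-h/(1+h)$ where $1+t=1/(1+h)$, replaces $(1+h)^\ell\pi(\A,\cdot)$ by $(1+h)^{\ell-1}\pi(\PP\A,\cdot)$ and yields \eqref{c-p}.

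The only nontrivial step is the Möbius inversion in the first paragraph; once that is in hand, everything else is a direct computation combining the standard CSM class of a projective subspace, the projection formula, and the deconing identity $\pi(\A,t)=(1+t)\pi(\PP\A,t)$. One subtlety worth flagging is that the argument uses $c_*$ applied to $\mathbbm{1}_{\PP X}$ viewed as a function on $\PP V$, not on $\PP X$; this is legitimate because $c_*$ commutes with proper pushforward and $\PP X\hookrightarrow\PP V$ is a closed immersion.
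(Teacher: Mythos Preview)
Your argument is correct and is essentially Aluffi's own proof of this formula. The paper itself does not give a proof of the lemma; it is simply quoted from \cite[Cor.~3.2]{Alu13} and used as input for the computations in \S\ref{section3}. One minor notational slip: with the convention $\hat 0=V$ (reverse inclusion on $L(\A)$) that you invoke, the flats $Y$ with $p\in\PP Y$ are exactly those with $Y\le X_p$, not $Y\ge X_p$; the defining identity $\sum_{\hat 0\le Y\le X_p}\mu(\hat 0,Y)=\delta_{\hat 0,X_p}$ of the M\"obius function then gives precisely the conclusion you state.
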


Formula (\ref{c-p}) holds true for any projective arrangement. 
On the other hand, if $\PP\A$ is locally free, $\Omega^1(\PP\A)$ is a vector bundle of rank $\ell-1$ on $\PP V$ whose total Chern class is directly related to the Poincar\'e polynomial due to (\ref{proj-Mustata}).
Combining formulas (\ref{c-p}) and (\ref{proj-Mustata}) yields a proof of the following result of Aluffi~
\cite[Thm~4.1]{Alu13}.

\begin{thm}[Aluffi]\label{theo-Aluffi}
Suppose that $\PP\A$ is a locally free arrangement in $\PP V.$ Then
\[
c(\Omega^1(\PP\A)^\vee)\cap [\PP V] = c_{SM}(M(\PP\A)).
\]
\end{thm}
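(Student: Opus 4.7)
The plan is to follow the same template as Aluffi's proof of Theorem~\ref{theo-Aluffi}, replacing the exact Musta\c{t}\v{a}--Schenck identity (\ref{proj-Mustata}) by a version corrected for the failure of local freeness. I would first establish a defect formula of the shape
\begin{equation*}
c_t(\Omega^1(\PP\A)(1)) = \pi(\PP\A,t) + D(t)\cdot h^{\ell-1}
\end{equation*}
in $A_*(\PP V)$ (depending polynomially on $t$), where $D(t)\in\QQ[t]$ is explicit and its coefficients are rational multiples of the correction term $N(\PP\A)$ from \cite[Def.~5.10]{DS12}. The local tameness hypothesis ensures that $\Omega^1(\PP\A)$ admits a locally free resolution short enough for its total Chern class to be defined through the resolution. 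Because the non-free locus is zero-dimensional, any contribution from the failure of local freeness is supported in codimension $\ell-1$; this is what pins the defect to the single monomial $h^{\ell-1}$.

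Next I would substitute this corrected identity into Aluffi's formula (\ref{c-p}) via the change of variable $t\mapsto -h/(1+h)$ already used in the proof of Theorem~\ref{theo-Aluffi}. The summand $\pi(\PP\A,t)$ reproduces $c_{SM}(M(\PP\A))$ exactly, as in the locally free setting. The defect $D(t)\,h^{\ell-1}$ already lives in top codimension, so the prefactor $(1+h)^{\ell-1}$ collapses to its constant term on this summand, and the substitution $t\mapsto -h/(1+h)$ reduces to evaluation of $D$ at $t=0$. This mechanism forces the overall correction to be a scalar multiple of $N(\PP\A)\,h^{\ell-1}$, matching the shape of the defect asserted in Theorem~\ref{thm:main}.

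The final step is to transport the identity from $c_t(\Omega^1(\PP\A)(1))$ to $c(\Omega^1(\PP\A)^\vee)\cap[\PP V]$. Writing $c_t(\Omega^1(\PP\A)(1))=\prod_j(1+(a_j+h)t)$ formally via the splitting principle, one has $c(\Omega^1(\PP\A)^\vee)=\prod_j(1-a_j)$; the required scalar $(-1)^{\ell-1}+(-1)^{\ell-2}(\ell-2)!$ should emerge from two contributions, namely the sign $(-1)^{\ell-1}$ coming from dualising a codimension-$(\ell-1)$ Chern class, and a factorial $(\ell-2)!$ produced by expanding $(1+h)^{\ell-1}\pi(\PP\A,-h/(1+h))$ modulo $h^\ell$. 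Extracting and comparing the coefficients of $h^{\ell-1}$ on both sides then pins down the scalar and completes the proof.

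The hard part is the first step, namely identifying $D(t)$ precisely in terms of $N(\PP\A)$. In the locally free case the Musta\c{t}\v{a}--Schenck identity follows from a direct Chern computation on a vector bundle; here one must instead compute through a locally free resolution of $\Omega^1(\PP\A)$ and control the length (or Euler characteristic) of the local cohomology of $\Omega^1(\A)$ supported at the non-free points. This is exactly the content of the analysis in \cite{DS12} that introduces the invariant $N(\PP\A)$, so the strategy is to transport their computation into the Chern-class formalism used here. Once $D(t)$ is determined, the remaining manipulations are a controlled but routine coefficient extraction in $A_*(\PP V) = \ZZ[h]/\ideal{h^\ell}$.
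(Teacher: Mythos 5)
Your proposal is aimed at the wrong statement: what you sketch is a proof of Theorem~\ref{thm:main} (the locally tame case with the correction term $N(\PP\A)$), not of Theorem~\ref{theo-Aluffi}, which concerns locally free arrangements and asserts equality with no defect. For the statement actually asked, no corrected Musta\c{t}\v{a}--Schenck identity, no locally free resolution, and no analysis of a non-free locus are needed: the paper's proof is simply to combine Aluffi's formula \eqref{c-p}, valid for every projective arrangement, with the exact identity \eqref{proj-Mustata}, which holds because $\Omega^1(\PP\A)$ is a vector bundle of rank $\ell-1$. Writing $\pi(\PP\A,t)=\sum_i b_i t^i$, formula \eqref{proj-Mustata} gives $c_i(\Omega^1(\PP\A)(1))=b_i$; dualizing and untwisting via \cite[Lem.~2.1]{Hart80} yields $c_k(\Omega^1(\PP\A)^\vee)=\sum_{i=0}^k\binom{\ell-1-i}{k-i}(-1)^i b_i$, and expanding $(1+h)^{\ell-1}\pi(\PP\A,-h/(1+h))$ modulo $h^\ell$ produces exactly the same coefficients. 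Your sketch never closes this loop: you do not observe that $N(\PP\A)=0$ precisely when $\PP\A$ is locally free (Remark~\ref{rmk zero loc}), so even granting every step you describe, the asserted equality is not derived.

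Even judged as a sketch of Theorem~\ref{thm:main}, two points are off. First, the ``hard part'' you defer --- pinning down the defect $D(t)$ --- is exactly the content of the Denham--Schulze formula \eqref{den}, where the defect is the single monomial $N(\PP\A)t^{\ell-1}$; your hybrid expression $D(t)\cdot h^{\ell-1}$ conflates the Chern-polynomial variable with the hyperplane class, and multiplying a genuine polynomial $D(t)$ by $h^{\ell-1}$ would in any case kill everything but its constant term. Second, your accounting of the scalar $(-1)^{\ell-1}+(-1)^{\ell-2}(\ell-2)!$ is incorrect: the factorial $(\ell-2)!$ does not come from expanding $(1+h)^{\ell-1}\pi(\PP\A,-h/(1+h))$ --- that expansion only ever produces binomial coefficients --- but from the Chern class of the skyscraper sheaf $\sExt^1_{\O_{\PP V}}(\Omega^1(\PP\A),\O_{\PP V})$ computed via Lemma~\ref{chern:point}, which enters when one dualizes a length-one locally free resolution of $\Omega^1(\PP\A)(1)$ and applies the Whitney formula. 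None of this machinery has any role in the locally free statement you were asked to prove.
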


\section{Defect for zero-dimensional non-free locus}\label{section3}

As mentioned above, Aluffi's Theorem $\ref{theo-Aluffi}$ can be derived from formula $\eqref{proj-Mustata}$.
The latter has been extended to the case of locally tame arrangements in \cite[Thm.~5.13]{DS12} by adding a correction term.
Using this formula we shall extend Aluffi's Theorem to locally tame arrangements.

We first recall the relevant terminology and results from \cite{DS12}.
Relaxing the condition of local freeness at a finite set of points, the following integer measures the failure of local freeness.

\begin{dfn}[{\cite[Def.~5.10]{DS12}}]\label{def N}\
\begin{asparaenum}
\item If $\A$ is an affine arrangement in $V$ with zero-dimensional non-free locus, we set 
\[
N(\A)=\length(\Ext^1_S(\Omega^1(\A),S)).
\]
\item If $\PP\A$ is a projective arrangement in $\PP V$ with zero-dimensional non-free locus, we set
\[
N(\PP\A) = h^{0}(\sExt^{1}_{\O_{\PP V}}(\Omega^{1}(\PP\A),\O_{\PP V}))= \sum_{X\in L_{\ell-1}(\A) }N(\A_X)
\]
where $\A_X$ is considered as an arrangement in an affine chart of $\PP V$.
\end{asparaenum}
\end{dfn}

\begin{rmk}\label{rmk zero loc}\
\begin{asparaenum}

\item The sheaf $\sExt^{p}_{\O_{\PP V}}(\Omega^{1}(\PP\A),\O_{\PP V})$ in Definition~\ref{def N} is supported at the non-free locus of $\PP\A$ and vanishes for $p>1$.
Moreover, $N(\PP\A) = 0$ means exactly that $\PP\A$ is locally free (see \cite[Rem.~2.7.1]{Hart80}).

\item Any arrangement in $\PP^3$ has zero-dimensional non-free locus since $\Omega^{1}(\PP\A)$ is a reflexive sheaf (see \cite[Cor.~1.4]{Hart80}).

\end{asparaenum}
\end{rmk}

The following notion is a projective version of the classical tameness condition frequently considered in algebraic arrangement theory:
$\A$ is \emph{tame} if $\pdim\Omega^p(\A)\le p$ for all $0\leq p\leq \ell$.

\begin{dfn}[{\cite[Def.~2.13]{DS12}}]\label{tame} 
The projective arrangement $\PP\A$ is called \emph{locally tame} if $\Omega^p(\PP\A)$ has a locally free resolution of length $p,$ for $0\leq p \leq \ell - 1$.
\end{dfn}

\begin{rmk}\label{rmk tame}\
\begin{asparaenum}

\item If $\A$ is tame then $\PP\A$ is locally tame.

\item All arrangements in $\PP^3$ are locally tame.
This follows from reflexivity of $\Omega^1(\PP\A)$, \cite[Prop.~1.3]{Hart80}, and the Auslander--Buchsbaum formula.

\end{asparaenum}
\end{rmk}

Denham and Schulze \cite[Thm.~5.13]{DS12} proved the following

\begin{thm}[Denham--Schulze]\label{D-S}
If $\PP\A$ is a locally tame arrangement in $\PP V$ with zero-dimensional non-free locus, then
\begin{equation}\label{den}
c_t(\Omega^1(\PP\A)(1)) = \pi(\PP\A,t) + N(\PP\A) t^{\ell-1}.
\end{equation}
\end{thm}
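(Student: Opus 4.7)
The plan is to view Theorem~\ref{thm:main} as a twisted combination of Denham--Schulze's Theorem~\ref{D-S} and Aluffi's Lemma~\eqref{c-p}. Two corrections to the locally free computation of \S\ref{sec Aluffi} must be tracked when $\Omega^1(\PP\A)$ fails to be locally free: first, $c(\Omega^1(\PP\A)^\vee)$ is no longer the value of $c_t(\Omega^1(\PP\A))$ at $t=-1$, because the derived dual $R\Hom_{\O_{\PP V}}(\Omega^1(\PP\A),\O_{\PP V})$ differs from the ordinary dual $\Omega^1(\PP\A)^\vee$ by $\sExt^1_{\O_{\PP V}}(\Omega^1(\PP\A),\O_{\PP V})$; second, one must pass via the twist formula from $\Omega^1(\PP\A)(1)$ (whose Chern polynomial is controlled by Theorem~\ref{D-S}) back to $\Omega^1(\PP\A)$.

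For the first correction, local tameness gives $\sExt^i_{\O_{\PP V}}(\Omega^1(\PP\A),\O_{\PP V})=0$ for $i\ge 2$ (Remark~\ref{rmk zero loc}), so in $K_0(\PP V)$ one has $[R\Hom_{\O_{\PP V}}(\Omega^1(\PP\A),\O_{\PP V})]=[\Omega^1(\PP\A)^\vee]-[\sExt^1_{\O_{\PP V}}(\Omega^1(\PP\A),\O_{\PP V})]$. Computing Chern classes of both sides via a (locally) length-one locally free resolution of $\Omega^1(\PP\A)$ identifies $c(R\Hom_{\O_{\PP V}}(\Omega^1(\PP\A),\O_{\PP V}))$ with $c_t(\Omega^1(\PP\A))|_{t=-1}$, and multiplicativity of the total Chern class on exact sequences yields
\[
c(\Omega^1(\PP\A)^\vee)=c_t(\Omega^1(\PP\A))|_{t=-1}\cdot c\bigl(\sExt^1_{\O_{\PP V}}(\Omega^1(\PP\A),\O_{\PP V})\bigr).
\]
For the second correction, the Chern-roots twist formula $c_s(E)=(1-hs)^{\ell-1}c_{s/(1-hs)}(E(1))$ at $s=-1$, applied to $E=\Omega^1(\PP\A)$, reads $c_t(\Omega^1(\PP\A))|_{t=-1}=(1+h)^{\ell-1}\,c_t(\Omega^1(\PP\A)(1))|_{t=-1/(1+h)}$. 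Since $c_i(\Omega^1(\PP\A)(1))=b_ih^i$ with $b_i$ the coefficient of $t^i$ in $\pi(\PP\A,t)+N(\PP\A)t^{\ell-1}$ by Theorem~\ref{D-S}, plugging $t=-1/(1+h)$ into $c_t(\Omega^1(\PP\A)(1))$ produces $\sum b_i(-h/(1+h))^i$, i.e.\ the integer polynomial $\pi(\PP\A,t)+N(\PP\A)t^{\ell-1}$ evaluated at $t=-h/(1+h)$. Multiplying by $(1+h)^{\ell-1}$ gives
\[
c_t(\Omega^1(\PP\A))|_{t=-1}=(1+h)^{\ell-1}\pi(\PP\A,-h/(1+h))+(-1)^{\ell-1}N(\PP\A)h^{\ell-1},
\]
whose first summand caps with $[\PP V]$ to $c_{SM}(M(\PP\A))$ by Aluffi's Lemma.

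Finally, the sheaf $\sExt^1_{\O_{\PP V}}(\Omega^1(\PP\A),\O_{\PP V})$ is supported on the zero-dimensional non-free locus with total length $N(\PP\A)$ by Definition~\ref{def N}. A Koszul resolution computation---or equivalently Newton's identities applied to $\ch(\O_p)=[p]$---shows that a skyscraper sheaf at a smooth point on a variety of dimension $d$ has total Chern class $1+(-1)^{d-1}(d-1)!\,[p]$, so
\[
c\bigl(\sExt^1_{\O_{\PP V}}(\Omega^1(\PP\A),\O_{\PP V})\bigr)=1+(-1)^{\ell-2}(\ell-2)!\,N(\PP\A)\,h^{\ell-1}.
\]
Multiplying the two factors modulo $h^\ell$ (so that $h^{\ell-1}\cdot h^{\ell-1}=0$ for $\ell\ge 2$), the cross term reduces to $(-1)^{\ell-2}(\ell-2)!\,N(\PP\A)\,h^{\ell-1}$ since the constant term of $c_t(\Omega^1(\PP\A))|_{t=-1}$ is $1$; summing with the previously obtained $(-1)^{\ell-1}N(\PP\A)h^{\ell-1}$ and capping with $[\PP V]$ gives Theorem~\ref{thm:main}. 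The main obstacle is the Chern class computation for skyscraper sheaves (which forces the factorial) together with the bookkeeping between the substitutions $t=-1/(1+h)$ and $t=-h/(1+h)$ that translate between the Chern polynomial and its integer shadow; once these are in place the remainder is a direct assembly.
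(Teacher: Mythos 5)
Your proposal does not prove the statement in question. The statement is Theorem~\ref{D-S} (Denham--Schulze), the identity $c_t(\Omega^1(\PP\A)(1)) = \pi(\PP\A,t) + N(\PP\A)t^{\ell-1}$, which the paper does not prove at all but imports from \cite[Thm.~5.13]{DS12}. What you have written is instead an argument for Theorem~\ref{thm:main}, and it explicitly invokes Theorem~\ref{D-S} as an input (``Since $c_i(\Omega^1(\PP\A)(1))=b_ih^i$ with $b_i$ the coefficient of $t^i$ in $\pi(\PP\A,t)+N(\PP\A)t^{\ell-1}$ by Theorem~\ref{D-S}\dots''). Relative to the assigned statement this is circular: you assume exactly what you are asked to establish. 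Nothing in your write-up addresses why the Chern polynomial of $\Omega^1(\PP\A)(1)$ deviates from the Poincar\'e polynomial by precisely $N(\PP\A)t^{\ell-1}$; that requires the Denham--Schulze machinery (local tameness giving a length-one locally free resolution, the comparison with the Musta\c{t}\u{a}--Schenck computation via Chern classes of the $\sExt^1$ skyscraper of length $N(\PP\A)$, etc.), none of which appears here.

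As a side remark: read as a proof of Theorem~\ref{thm:main}, your argument is essentially the paper's own proof in \S\ref{section3}, with the twist from $\Omega^1(\PP\A)(1)$ back to $\Omega^1(\PP\A)$ handled by the substitution $t\mapsto -h/(1+h)$ rather than by the binomial expansion of $c_k((\Omega^1(\PP\A)^\vee(-1))(1))$; the dualization step, the use of Lemma~\ref{chern:point} for the skyscraper, and the final bookkeeping of the two $N(\PP\A)h^{\ell-1}$ contributions all coincide with the paper. But that is a correct proof of the wrong theorem, not a proof of Theorem~\ref{D-S}.
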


The following can be found in \cite[Ex.~15.3.1]{Ful98}, which generalizes a result of Hartshorne \cite[Lem.~2.7]{Hart80}.

\begin{lem}\label{chern:point} 
The Chern polynomial of a reduced point in $\PP^d$ is $1 + (-1)^{d-1}(d-1)!t^d$.
\end{lem}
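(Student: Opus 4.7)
I would derive the formula from a Koszul resolution of the point. After a projective change of coordinates we may take $p=[0\!:\!\cdots\!:\!0\!:\!1]\in\PP^d$, cut out by the regular sequence $z_0,\dots,z_{d-1}$. The associated Koszul complex sheafifies to a locally free resolution
\[
0\to\O_{\PP^d}(-d)\to\cdots\to\O_{\PP^d}(-i)^{\binom{d}{i}}\to\cdots\to\O_{\PP^d}(-1)^d\to\O_{\PP^d}\to\O_p\to 0.
\]
Multiplicativity of the total Chern class, together with $c_t(\O_{\PP^d}(-i))=1-iht$, yields
\[
c_t(\O_p)=\prod_{i=0}^d(1-iht)^{(-1)^i\binom{d}{i}}
\]
in $A^*(\PP^d)[t]=(\ZZ[h]/\ideal{h^{d+1}})[t]$.

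To extract the answer I would take formal logarithms and interchange summations:
\[
\log c_t(\O_p)=-\sum_{k=1}^{d}\frac{(ht)^k}{k}\sum_{i=0}^d(-1)^i\binom{d}{i}i^k,
\]
where the outer sum terminates at $k=d$ because $h^{d+1}=0$. The classical finite-difference identity
\[
\sum_{i=0}^d(-1)^i\binom{d}{i}i^k=(-1)^d d!\,S(k,d),
\]
with $S(k,d)$ the Stirling number of the second kind, vanishes for $k<d$ and equals $(-1)^d d!$ for $k=d$; so only the $k=d$ term survives, giving $\log c_t(\O_p)=(-1)^{d-1}(d-1)!\,(ht)^d$. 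Since $(ht)^{d+1}=0$, exponentiation yields $c_t(\O_p)=1+(-1)^{d-1}(d-1)!\,h^d t^d$, which is the stated formula under the paper's convention of identifying $h^d t^d$ with $t^d$ in the top degree.

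The only nontrivial ingredient is the Stirling identity, which is elementary (apply the $d$-th forward difference $\Delta^d$ to $x^k$ at $x=0$). A slightly tidier alternative is to observe first that $\ch(\O_p)=\sum_{i=0}^d(-1)^i\binom{d}{i}e^{-ih}=h^d$ by the same sum; then since $\O_p$ is supported in codimension $d$, the vanishings $c_1(\O_p)=\cdots=c_{d-1}(\O_p)=0$ reduce Newton's identity to $d!\,\ch_d(\O_p)=(-1)^{d-1}d\,c_d(\O_p)$, giving $c_d(\O_p)=(-1)^{d-1}(d-1)!\,h^d$ at once.
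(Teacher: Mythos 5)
Your proposal is correct, but it is worth noting that the paper does not prove Lemma~\ref{chern:point} at all: it simply cites \cite[Ex.~15.3.1]{Ful98}, whose general statement (for any coherent sheaf supported in codimension $d$) rests on Riemann--Roch without denominators and generalizes \cite[Lem.~2.7]{Hart80}. Your argument is a genuinely different, self-contained route for the special case actually needed: the Koszul resolution of the point by $\O_{\PP^d}(-i)^{\binom{d}{i}}$, Whitney's formula, and the finite-difference identity $\sum_{i=0}^d(-1)^i\binom{d}{i}i^k=(-1)^d d!\,S(k,d)$ isolating the $k=d$ term of the logarithm. This buys an elementary, fully explicit verification at the cost of generality; the citation route buys the statement for arbitrary codimension-$d$ supports with no computation. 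Two small points you should make explicit: the formal logarithm/exponential manipulation takes place in $A^*(\PP^d)\otimes\QQ$, which is harmless because $A^*(\PP^d)\cong\ZZ[h]/\ideal{h^{d+1}}$ is torsion-free, so no integrality is lost on returning to the integral answer; and in your alternative argument the vanishing $c_1(\O_p)=\cdots=c_{d-1}(\O_p)=0$ does not follow merely from the support having codimension $d$ --- it should be deduced from $\ch_i(\O_p)=0$ for $i<d$ (which your displayed computation $\ch(\O_p)=h^d$ provides) together with the fact that the Newton identities express each $c_i$ as a polynomial in $\ch_1,\dots,\ch_i$.
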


We can now prove our main result extending Theorem~$\ref{theo-Aluffi}$ to the case of locally tame arrangements.

\begin{proof}[Proof of Theorem~\ref{thm:main}]
Local tameness provides a locally free resolution 
\[
\xymat{
0\ar[r] & \F_1\ar[r] & \F_0\ar[r] & \Omega^1(\PP\A)(1)\ar[r] & 0.
}
\]
As $\PP\A$ has zero-dimensional non-free locus (see Remark~\ref{rmk zero loc}), dualizing leads to an exact sequence
\[
\xymat{
0\ar[r] & \Omega^1(\PP\A)^\vee(-1)\ar[r] & \F_0^\vee\ar[r] &\F_1^\vee\ar[r] & \sExt_{\O_{\PP V}}(\Omega^1(\PP\A), \O_{\PP V})\ar[r] & 0.
}
\]
Applying the Whitney sum formula (see \cite[Thm.~3.2.(e)]{Ful98}) gives
\begin{align*}
c_t(\Omega^1(\PP\A)^\vee(-1)) 
&= c_t(\F_0^\vee)c_t(\F_1^\vee)^{-1}c_t(\sExt^{1}_{\O_{\PP V}}(\Omega^1(\PP\A), \O_{\PP V}))\\
&=c_{-t}(\F_0)c_{-t}(\F_1)^{-1}c_t(\sExt^{1}_{\O_{\PP V}}(\Omega^1(\PP\A), \O_{\PP V}))\\
&= c_{-t}(\Omega^1(\PP\A)(1))(1 + (-1)^{\ell-2}(\ell-2)!N(\PP\A)t^{\ell-1})
\end{align*}
where we use \cite[Rem.~3.2.3(a)]{Ful98} and Lemma~\ref{chern:point} for the second and third equality.
Substituting $\eqref{den}$ yields an identity
\begin{align}\label{eq:cp}
c_t(\Omega^1(\PP\A)^\vee(-1))
&=(\pi(\PP\A,-t) + (-1)^{\ell-1}N(\PP\A)t^{\ell-1})\\
\nonumber&\ \ \cdot(1+(-1)^{\ell-2}(\ell-2)!N(\PP\A)t^{\ell-1})\\
\nonumber &=\pi(\PP\A,-t) + ((-1)^{\ell-1}+(-1)^{\ell-2}(\ell-2)!)N(\PP\A)t^{\ell-1}
\end{align}
in the Chow ring $A^*(\PP V)\cong \ZZ[t]/\langle t^{\ell}\rangle$.
Expand the Poincar\'e polynomial of $\PP\A$ as
\[
\pi(\PP\A, t) = \sum_{i = 0}^{\ell-1}b_it^i.
\]
Then (\ref{eq:cp}) implies
\[
c_i(\Omega^1(\PP\A)^\vee(-1)) = (-1)^ib_i, \quad\text{ for all } i = 0,\dots,\ell-2,
\]
and 
\[
c_{\ell-1}(\Omega^1(\PP\A)^\vee(-1)) = (-1)^{\ell-1}b_{\ell-1} + ((-1)^{\ell-1}+(-1)^{\ell-2}(\ell-2)!)N(\PP\A).
\]
Rewriting $\Omega^1(\PP\A)^\vee = (\Omega^1(\PP\A)^\vee(-1))(1)$ and applying the formula for Chern classes of the twisted sheaves (see \cite[Lem.~ 2.1]{Hart80}), we deduce
\begin{align}\label{eq:ckbi}
c_k(\Omega^1(\PP\A)^\vee)
& = c_k((\Omega^1(\PP\A)^\vee(-1))(1)) \\
\nonumber&= \sum_{i = 0}^k\binom{\ell -1-i}{k-i}c_{i}(\Omega^1(\PP\A)^\vee(-1))\\
\nonumber& =\sum_{i = 0}^k\binom{\ell -1-i}{k-i}(-1)^{i}b_{i}, \quad\text{ for all } k = 0,\dots,\ell-2,
\end{align}
and 
\[
c_{\ell-1}(\Omega^1(\PP\A)^\vee) = \sum_{i = 0}^{\ell-1}\binom{\ell -1-  i}{\ell-1-i}(-1)^{i}b_{i} + ((-1)^{\ell-1}+(-1)^{\ell-2}(\ell-2)!)N(\PP\A).
\]
Using $(\ref{c-p})$ this implies the claim:
\begin{align*}
c_{SM}(M(\PP\A)) &= (1+h)^{\ell-1}\pi(\PP\A, \frac{-h}{1+h})\\
& = \sum_{i = 0}^{\ell-1}(-1)^i b_ih^i(1+h)^{\ell-1-i}\\
& = \sum_{i = 0}^{\ell-1}(-1)^i b_i\sum_{i+j\le\ell-1}{\ell-1-i\choose j}h^{i+j}\\
& = \sum_{k=0}^{\ell-1}\sum_{i = 0}^k(-1)^i b_i{\ell-1-i\choose k-i}h^k\\
& = \sum_{k=0}^{\ell-1}c_k(\Omega^1(\PP\A)^\vee)h^k -((-1)^{\ell-1}+(-1)^{\ell-2}(\ell-2)!)N(\PP\A)h^{\ell-1}\\
& = c(\Omega^1(\PP\A)^\vee)\cap [\PP V]-((-1)^{\ell-1}+(-1)^{\ell-2}(\ell-2)!)N(\PP\A)h^{\ell-1}.
\end{align*}
\end{proof}

By Remarks~\ref{rmk zero loc} and \ref{rmk tame}, the hypotheses of Theorem~$\ref{thm:main}$ are automatically fulfilled by any arrangement in $\PP^3$.

\begin{cor}\label{cor:1}
If $\PP\A$ is an arrangement in $\PP^3,$ then
\[
c(\Omega^1(\PP\A)^\vee)\cap [\PP^3] = c_{SM}(M(\PP\A)) + N(\PP\A)h^3.
\] 
\end{cor}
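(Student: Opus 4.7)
The plan is to combine the Denham--Schulze formula \eqref{den} for the Chern polynomial of $\Omega^1(\PP\A)(1)$ with Aluffi's formula \eqref{c-p} for the CSM class. The bridge is to compute $c(\Omega^1(\PP\A)^\vee)$ in terms of $\pi(\PP\A,t)$ and $N(\PP\A)$, which requires controlling both the dualization of $\Omega^1(\PP\A)$ and the Serre twist.

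First, I will invoke local tameness to produce a length-one locally free resolution $0 \to \F_1 \to \F_0 \to \Omega^1(\PP\A)(1) \to 0$. Because the non-free locus is zero-dimensional, the sheaf $\sExt^1_{\O_{\PP V}}(\Omega^1(\PP\A),\O_{\PP V})$ is a skyscraper supported at finitely many reduced points whose total length equals $N(\PP\A)$ (by Definition~\ref{def N} and Remark~\ref{rmk zero loc}), and $\sExt^p$ vanishes for $p\geq 2$. Dualizing the resolution therefore yields a four-term exact sequence relating $\Omega^1(\PP\A)^\vee(-1)$, $\F_0^\vee$, $\F_1^\vee$, and this $\sExt^1$ sheaf.

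Next, applying Whitney sum together with the identity $c_t(\F^\vee) = c_{-t}(\F)$ for locally free $\F$ and using Lemma~\ref{chern:point} for the skyscraper, I can read off
\[
c_t(\Omega^1(\PP\A)^\vee(-1)) = c_{-t}(\Omega^1(\PP\A)(1))\cdot\bigl(1 + (-1)^{\ell-2}(\ell-2)!\,N(\PP\A)t^{\ell-1}\bigr).
\]
Substituting Theorem~\ref{D-S} (with $-t$ in place of $t$) and multiplying out in the truncated Chow ring $\ZZ[t]/\langle t^{\ell}\rangle$, all cross terms beyond degree $\ell-1$ vanish, while the leading coefficient $1$ of $\pi(\PP\A,-t)$ is the only contributor to the degree-$(\ell-1)$ correction. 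This should produce $c_t(\Omega^1(\PP\A)^\vee(-1)) = \pi(\PP\A,-t) + \bigl((-1)^{\ell-1}+(-1)^{\ell-2}(\ell-2)!\bigr)N(\PP\A)\,t^{\ell-1}$.

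Finally, I will untwist via the standard formula for Chern classes under a Serre twist, recovering $c_k(\Omega^1(\PP\A)^\vee)$ as a binomial combination $\sum_i \binom{\ell-1-i}{k-i}(-1)^i b_i$, where $\pi(\PP\A,t) = \sum b_i t^i$, adjusted by the defect term in degree $\ell-1$. Independently, expanding Aluffi's formula \eqref{c-p} as $(1+h)^{\ell-1}\pi(\PP\A,-h/(1+h))$ and collecting the coefficient of $h^k$ produces exactly the same binomial sum. Matching the two expansions term by term yields the claimed equality, with the discrepancy concentrated in $h^{\ell-1}$. The main obstacle I anticipate is purely bookkeeping: carefully tracking signs, the two consecutive twists $(-1)$ and $(+1)$, and the interaction between $\pi(\PP\A,-t)$ and the binomial expansion so that everything except the defect term cancels cleanly.
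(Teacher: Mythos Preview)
Your argument is correct and in fact reproduces, essentially verbatim, the paper's proof of Theorem~\ref{thm:main}. However, the paper does not re-run this machinery to obtain Corollary~\ref{cor:1}; it simply observes (via Remarks~\ref{rmk zero loc} and~\ref{rmk tame}) that every arrangement in $\PP^3$ is automatically locally tame with zero-dimensional non-free locus, applies Theorem~\ref{thm:main} with $\ell=4$, and evaluates the coefficient $(-1)^{\ell-1}+(-1)^{\ell-2}(\ell-2)! = -1+2 = 1$. Your write-up treats local tameness and the zero-dimensional non-free locus as standing hypotheses to be ``invoked'', but for the corollary as stated they are conclusions that must be checked (reflexivity of $\Omega^1(\PP\A)$ plus Auslander--Buchsbaum, resp.\ \cite[Cor.~1.4]{Hart80}); and you never carry out the final specialization to $\ell=4$ that turns the general defect coefficient into the clean $N(\PP\A)h^3$. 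Once you add those two sentences, your proof is complete---but also substantially longer than necessary, since all the real work is already packaged in Theorem~\ref{thm:main}.
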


The following example illustrate the formula in Corollary~\ref{cor:1}.

\begin{exa}
Consider the arrangement
\[
\PP\A =\{xyzw(x-w)(y-w)(x+y+z)(x-y+z)=0\}\subset \PP^3.
\]
Aluffi gave an algorithm for computing the CSM class of projective schemes (see \cite{Alu03}). 
Using the \Macaulay~implementation~\cite{M2} of this algorithm, we find
\[
c_{SM}(\PP\A) =  8h - h^2 + 9h^3.
\]
Hence,
\begin{align*}
c_{SM}(M(\PP\A)) &= c_{SM}(\PP^3) - c_{SM}(\PP\A)\\
&= ((1+h)^4-h^4)- (8h - h^2 + 9h^3)\\
&= 1 - 4h + 7h^2-5h^3.
\end{align*}
By another \Macaulay~calculation, $N(\PP\A) = 3$ and hence
\[
c(\Omega^1(\PP\A)^\vee)\cap [\PP^3] = 1 - 4h + 7h^2-2h^3
\]
by Corollary $\ref{cor:1}$.

The same result can be found following a different approach:
Using \eqref{eq: Omega zero} and \cite{Chan04}, the Chern polynomial $c_t(\Omega^1(\PP\A)^\vee)$ can be obtained 
by from the Hilbert polynomial $P(\Omega_0^1(\A)^\vee,t)$. 
Applying \Macaulay's \texttt{HyperplaneArrangements} package, we compute
\[
P(\Omega^1(\A)^\vee(-1), t)=\frac{2}{3}t^ 3 - \frac{5}{3}t + 2.
\]
Since $\Omega^1(\A)^\vee(-1)\cong \Omega_0^1(\A)^\vee(-1)\oplus S(-1)$ (see \eqref{eq: Omega split}), we thus get
\[
P(\Omega_0^1(\A)^\vee(-1),t) = \left(\frac{2}{3}t^3 - \frac{5}{3}t + 2\right) - \binom{t+2}{3} = \frac{1}{2}t^3 -\frac{1}{2}t^2 -2t + 2.
\]
A simple computation shows that
\[
P(\Omega_0^1(\A)^\vee, t) = \frac{1}{2}t^3+t^2-\frac{3}{2}t = 5P(\O_{\PP^3}(-2),t) - 2P(\O_{\PP^3}(-3),t),
\]
which gives
\[
c_t(\Omega^1(\PP\A)^\vee)=c_t(\wt{\Omega_0^1(\A)^\vee})=\frac{(1-2t)^5}{(1-3t)^2} = 1 - 4t + 7t^2-2t^3.
\]
Corollary $\ref{cor:1}$ now yields 
\[
c_{SM}(M(\PP\A)) = 1 - 4h + 7h^2-5h^3,
\]
as above.
\end{exa}

\bibliographystyle{amsalpha}
\bibliography{tarr}
\end{document}